\documentclass[12pt]{article}
\usepackage[centertags]{amsmath}
\usepackage{amsfonts}
\usepackage{amssymb}
\usepackage{amsthm}
\usepackage{tikz-cd}
\input{amssym.def}

\parindent 2pc

\textwidth=7.1 in \textheight=9.5 in \hoffset=-1 in \voffset=-.8 in

\newtheorem{Definition}{Definition}[section]
\newtheorem{Theorem}[Definition]{Theorem}
\newtheorem{Lemma}[Definition]{Lemma}
\newtheorem{Proposition}[Definition]{Proposition}

\newtheorem{Example}[Definition]{Example}

\newcommand{\lc}{\mathcal{L}}
\newcommand{\rc}{\mathcal{R}}
\newcommand{\hc}{\mathcal{H}}
\newcommand{\jc}{\mathcal{J}}

\title{\Large \bf Idempotent ordered semigroup}
\author{ K. Hansda \\
\footnotesize{Department of Mathematics, Visva-Bharati
University,}\\
\footnotesize{Santiniketan, Bolpur - 731235, West Bengal, India}\\
\footnotesize{kalyanh4@gmail.com}}

\begin{document}

\maketitle

\begin{abstract}{\footnotesize}
An element $e$  of an ordered semigroup $(S, \cdot, \leq )$ is
called an ordered idempotent if $e \leq e^2$. We call an ordered
semigroup $S$ idempotent ordered semigroup if every element of $S$
is an ordered idempotent.  Every idempotent semigroup is a complete
semilattice of  rectangular idempotent semigroups and in this way we
arrive to many other important classes of idempotent ordered
semigroups.

\end{abstract}

\section{Introduction}
Idempotents play an important role in  different major subclasses of
a regular semigroup. A regular semigroup $S$ is called orthodox if
the set of all idempotents $E(S)$ forms a subsemigroup, and $S$ is a
band if $S=E(S)$.

T. Saito studied systematically the influence of order on idempotent
semigroup \cite{Saito1962}.  In \cite{bh1}, Bhuniya and Hansda
introduced the notion of ordered idempotents and  studied different
classes of regular ordered semigroups, such as, completely regular,
Clifford and left Clifford  ordered semigroups by their ordered
idempotents.   The purpose of this paper to study  ordered
semigroups in which every element is  an ordered idempotent.
Complete semilattice decomposition of these semigroups automatically
suggests the looks of rectangular idempotent semigroups and in this
way we arrive to many other important classes of idempotent ordered
semigroups.

The presentation of the article is as follows. This section is
followed by preliminaries. In Section 3, is devoted to the
idempotent ordered semigroups and characterizations of different
type of idempotent ordered semigroups.

\section{Preliminaries}
An ordered semigroup  is a partially ordered set $(S, \leq)$, and at
the same time a semigroup $(S,\cdot)$ such that $ \textrm{for all}
\; a, b , c \in S;  \;a \leq b \;\textrm{implies} \;ca \leq cb
\;\textrm{and} \;ac \leq bc$. It is denoted by $(S,\cdot, \leq)$.
Throughout this article, unless stated otherwise, $S$ stands for an
ordered semigroup. For every subset $H\subseteq S$, denote $(H] =
\{t \in S : t \leq  h, \;\textrm{for \;some} \;h \in H \}$. Let $I$
be a nonempty subset of an ordered semigroup $S$. $I$ is a left
(right) ideal of $S$, if $SI \subseteq I \;( I S \subseteq I)$ and
$(I]= I$. $I$ is an ideal of $S$ if $I$ is both a left and a right
ideal of $S$. An (left, right) ideal $I \;\textrm{of} \;S$ is proper
if $I \neq S$. $S$ is left (right) simple if it does not contain
proper left (right) ideals. $S$ is called t-simple if it is both
left and right simple and  $S$ is  called simple if it does not
contain any proper ideals.

Kehayopulu \cite{ke91} defined Green's relations on a regular
ordered semigroup $S$ as follows:
$$ a \lc b   \; if   \;(S^1a]=(S^1b],  \;a \rc b   \; if   \;(aS^1]=(bS^1], \;a \jc b   \; if   \;(S^1aS^1]=(S^1bS^1], \;\textrm{and} \;\hc= \;\lc \cap
\;\rc.$$ These four relations $\lc, \;\rc, \;\jc \;\textrm{and}
\;\hc$ are equivalence relations.

An equivalence relation $\rho$ on $S$ is called left (right)
congruence if for every $a, b, c \in S; \;a \;\rho \;b \;
\textrm{implies}$ $ \;ca \;\rho \;cb \;(ac \;\rho \;bc)$. By a
congruence we mean both left and right congruence. A congruence
$\rho$ is called a semilattice congruence on $S$ if for all $a, b
\in S, \;a \;\rho \;a^{2} \;\textrm{and} \;ab \;\rho \;ba$. By a
complete semilattice congruence we mean a semilattice congruence
$\sigma$ on $S$ such that for $a, b \in S, \;a \leq b$ implies that
$a \sigma ab$.  Equivalently: There exists a semilattice $Y$ and a
family of subsemigroups $\{S\}_{\alpha \in Y}$ of type $\tau$ of $S$
such that:
\begin{enumerate}
\item \vspace{-.4cm}
$S_{\alpha}\cap S_{\beta}= \;\phi$ for any $\alpha, \;\beta \in \;Y
\;with \; \alpha \neq \beta,$
\item \vspace{-.4cm}
$S=\bigcup _{\alpha \;\in \;Y} \;S_{\alpha},$
\item \vspace{-.4cm}
$S_{\alpha}S_{\beta} \;\subseteq \;S_{\alpha \;\beta}$ for any
$\alpha, \;\beta \in \;Y,$
\item \vspace{-.4cm}
$S_{\beta}\cap (S_{\alpha}]\neq \phi$ implies $\beta \;\preceq
\;\alpha,$ where $\preceq$ is the order of the semilattice $Y$
defined by \\$\preceq:=\{(\alpha,\;\beta)\;\mid
\;\alpha=\alpha\;\beta\;(\beta\;\alpha)\}$ \cite{ke 2008}.
\end{enumerate}

An ordered semigroup $S$ is called left regular if for every $a \in
S, \;a \in (Sa^2]$. An element $e$ of  $(S, \cdot, \leq )$ is called
an ordered idempotent \cite{bh1} if $e \leq e^2$. As an immediate
example of idempotent ordered semigroups, we can consider
$(\mathbb{N}, \;\cdot, \;\leq )$ which is  an idempotent ordered
semigroup but $(\mathbb{N}, \;\cdot)$ is not an idempotent
semigroup. An ordered semigroup  $S$ is called $\hc-$commutative if
for every $a, b \in S, \;ab\in (bSa]$.

If $F$ is a semigroup, then the set $P_f(F)$ of all finite subsets
of $F$ is a semilattice ordered semigroup with respect to the
product $'\cdot'$ and partial order relation $'\leq'$ given by: for
$A, B \in P_f(F)$,
$$ A \cdot B = \{ab \mid a \in A, b \in B\} \;
\textrm{and} \; A \leq B \; \textrm{if and  only  if} \; A \subseteq
B.$$

\section{Idempotent ordered semigroups}
We have discussed the elementary properties of ordered idempotents.
In this section we characterize ordered semigroups of which every
element is an ordered idempotent. Here we show that  these ordered
semigroups are analogous to bands.

We now  make a natural analogy between band and  idempotent ordered
semigroup.
\begin{Theorem}
Let $B$ be a semigroup. Then $P_f(B)$  is idempotent ordered
semigroup if and only if $B$ is a band.
\end{Theorem}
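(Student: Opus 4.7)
The plan is to unwind the two conditions in the statement and reduce the equivalence to a pair of very short set-theoretic arguments, the heart of which is testing the idempotency condition on singleton subsets of $B$.

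For the $(\Rightarrow)$ direction, I would assume that every $A \in P_f(B)$ satisfies $A \leq A \cdot A$. The key step is to specialize this to singletons: for any $a \in B$, the singleton $\{a\}$ lies in $P_f(B)$, and by hypothesis $\{a\} \leq \{a\}\cdot\{a\} = \{a^2\}$. Unwrapping the definition of $\leq$ on $P_f(B)$ (inclusion of finite subsets), this forces $\{a\} \subseteq \{a^2\}$, i.e.\ $a = a^2$. Since $a \in B$ was arbitrary, $B$ is a band.

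For the $(\Leftarrow)$ direction, assume $B$ is a band, so that $b = b^2$ for all $b \in B$. Given any $A \in P_f(B)$, I want $A \leq A \cdot A$, i.e.\ $A \subseteq A \cdot A$. Pick $a \in A$; then $a = a^2 = a \cdot a \in A \cdot A$ by definition of the product in $P_f(B)$. Hence $A \subseteq A \cdot A$, proving that every element of $P_f(B)$ is an ordered idempotent.

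Both directions are essentially one-line verifications once the correct test element (singleton for the forward direction, diagonal product $a\cdot a$ for the converse) is identified, so I do not expect any substantive obstacle. The only minor care needed is in recording that the product in $P_f(B)$ really is $\{ab : a\in A,\, b \in B\}$ and that the order really is set inclusion, as specified in the Preliminaries, so that $\{a\} \leq \{a^2\}$ does translate to the equality $a = a^2$ and not merely to some weaker relation.
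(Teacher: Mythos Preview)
Your proposal is correct and matches the paper's own proof essentially line for line: the paper also tests singletons $\{y\}$ to obtain $y=y^2$ for the forward direction, and uses $x=x^2\in U^2$ for each $x\in U$ to get $U\subseteq U^2$ for the converse. The only superficial difference is the order in which the two implications are presented.
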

\begin{proof}
Let $B$ be a band and $U\in P_f(B)$. Choose $x \in U$. Then $x^2 \in
U^2$ implies $x \in U^2$. Then $U \subseteq U^2$. So $P_f(B)$ is
idempotent ordered semigroup.

Conversely, assume that $B$ be a semigroup such that $P_f(B)$ is an
idempotent  ordered semigroup. Take $y \in B$. Then $Y=\{y\}\in
P_f(B)$. Thus $Y\subseteq Y^2$, which implies $y=y^2$. Hence $B$ is
a band.
\end{proof}
\begin{Proposition}
Let $B$ be a band, $S$ be an idempotent  ordered semigroup and $f:B
\longrightarrow S$ be a semigroup homomorphism. Then there is an
ordered semigroup homomorphism $\phi : P_f(B) \longrightarrow S$
such that the following diagram is commutative:
\begin{center}
\begin{tikzcd}[row sep=3.5em, column sep=3.5em]
B \arrow{r}{f} \arrow{d}{l} &S \\
P_f(B) \arrow[swap]{ur}{\phi}
\end{tikzcd}
\end{center}
where $l : B\longrightarrow P_f (B)$ is given by $l(x) = \{x\}$.
\end{Proposition}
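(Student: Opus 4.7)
The plan is to construct $\phi$ explicitly on each finite subset of $B$ by sending it to an ordered product of its $f$-images in $S$, and then to verify in turn that the diagram commutes, that $\phi$ is a semigroup homomorphism, and that $\phi$ is order preserving.

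To define $\phi$, I would first fix once and for all a well-ordering $\prec$ on the underlying set of $B$. Given $A = \{a_1 \prec a_2 \prec \cdots \prec a_n\} \in P_f(B)$, set
$$\phi(A) = f(a_1)\, f(a_2) \cdots f(a_n) \in S.$$
The enumeration of $A$ is uniquely pinned down by $\prec$, so $\phi$ is well defined. Specialising to a singleton gives $\phi(l(x)) = \phi(\{x\}) = f(x)$, so the triangle commutes immediately.

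For the semigroup homomorphism property I would compute $\phi(A \cdot C)$ and $\phi(A)\phi(C)$ and show they agree in $S$. Because $B$ is a band and $f$ is a semigroup homomorphism, each element $f(b)$ is a genuine idempotent of $S$, so the sub-semigroup $f(B) \subseteq S$ is itself a band. Then $\phi(A)\phi(C) = f(a_1) \cdots f(a_n)\, f(c_1) \cdots f(c_m)$ is a word in the band $f(B)$, while $\phi(A \cdot C)$ is another word whose letters are $f(a_ic_j) = f(a_i)f(c_j)$ listed in the $\prec$-order on $A\cdot C$. The strategy is to use the band identities available in $f(B)$, together with $f(a_ic_j)=f(a_i)f(c_j)$, to rewrite one product into the other.

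For order preservation, if $A \subseteq C$ then $\phi(C)$ differs from $\phi(A)$ only by the insertion of the extra factors $f(c)$, $c \in C \setminus A$, at positions determined by $\prec$. The hypothesis that every element of $S$ satisfies $e \leq e^{2}$, combined with compatibility of $\leq$ with multiplication on both sides, is exactly what is needed to show that inserting any such factor does not decrease the resulting product, so $\phi(A) \leq \phi(C)$.

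The main obstacle I foresee is the semigroup homomorphism step. The product of a finite set of idempotents in a non-commutative band is sensitive to the order of its factors, while the definition of $\phi$ rests on the fixed well-ordering $\prec$; the enumerations of $A$, $C$, and $A \cdot C$ do not line up in any obvious way. The only tools available to reconcile them are the band identities inside the image $f(B)$ and the ordered-idempotent inequality $e \leq e^{2}$ in $S$, so the core of the proof will be a careful sequence of rewrites inside $f(B)$ that converts $\phi(A)\phi(C)$ into $\phi(A \cdot C)$ using nothing more than these two facts.
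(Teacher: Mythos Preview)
Your plan has a genuine gap at the semigroup-homomorphism step, and in fact the map you propose is \emph{not} in general multiplicative. Take $B$ to be the $2\times 2$ rectangular band $\{1,2\}\times\{1,2\}$ with $(i,j)(k,l)=(i,l)$, give $S=B$ the trivial partial order (each idempotent then satisfies $e\le e^{2}$ trivially), and let $f$ be the identity. Write $a=(1,1)$, $b=(1,2)$, $c=(2,1)$, $d=(2,2)$ and fix the well-order $a\prec b\prec c\prec d$. Your recipe gives $\phi(\{a,d\})=ad=b$ and $\phi(\{b,c\})=bc=a$, so $\phi(\{a,d\})\,\phi(\{b,c\})=ba=a$. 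On the other hand $\{a,d\}\cdot\{b,c\}=\{ab,ac,db,dc\}=\{a,b,c,d\}$ and $\phi(\{a,b,c,d\})=abcd=b$. Hence $\phi$ is not a homomorphism, and no ``careful sequence of rewrites'' can fix this: in a trivially ordered band the inequality $e\le e^{2}$ is an equality, so you are left with nothing but the band identities, which do not allow arbitrary reordering. The same example also breaks your order-preservation argument: $\{a\}\subseteq\{a,d\}$, yet $\phi(\{a\})=a$ is not $\le\phi(\{a,d\})=b$.

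The paper's proof takes an entirely different route: it defines $\phi(A)=\bigvee_{a\in A}f(a)$, the join of the images in $S$, and then checks multiplicativity via $\bigvee_{a,b}f(a)f(b)=(\bigvee_a f(a))(\bigvee_b f(b))$ and monotonicity directly from $A\subseteq B$. This argument needs $S$ to possess finite joins over which multiplication distributes (i.e.\ $S$ should be a \emph{semilattice} ordered semigroup, like $P_{f}(B)$ itself); that hypothesis is used in the proof but is not recorded in the statement. The point to take away is that the extension $\phi$ must be built from the order structure of $S$, not from an arbitrary enumeration of the elements of~$A$.
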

\begin{proof}
Define $\phi : P_f(F) \longrightarrow S$ by: for $A \in P_f(F)$, $
\phi(A)=\vee_{a \in A} f(a)$. Then for every $A, B \in P_f(F),
\;\phi(AB)= \vee_{a \in A, b \in B}f(ab)= \vee_{a \in A, b \in
B}f(a)f(b)= (\vee_{a \in A}f(a))(\vee_{b \in B}f(b))= \phi(A)
\phi(B),$ and if $A \leq B$, then $\phi(A) = \vee_{a \in A}f(a) \leq
\vee_{b \in B}f(b) = \phi(B)$ shows that $\phi$ is an ordered
semigroup homomorphism. Also $\phi \circ l = f$.
\end{proof}
\begin{Lemma}\label{cr17}
In an idempotent  ordered semigroup $S, \; a^{m}\leq a^{n} \; for
\;every \; a\in S \;and \;m, n\in \mathbb{N}  \; with \; m\leq n $.
\end{Lemma}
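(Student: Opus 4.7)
The plan is to reduce the claim to the single statement $a^k\leq a^{k+1}$ for every $k\geq 1$, since once we have that, transitivity gives $a^m\leq a^{m+1}\leq a^{m+2}\leq\cdots\leq a^n$ for any $m\leq n$. So the whole argument collapses to an induction on $k$.

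For the base case $k=1$, the inequality $a\leq a^{2}$ is exactly the hypothesis that $a$ is an ordered idempotent, which holds by assumption since $S$ is an idempotent ordered semigroup. For the inductive step, I would assume $a^{k}\leq a^{k+1}$ and then multiply both sides on the right (or equivalently on the left) by $a$. Compatibility of the order with the semigroup operation, which is built into the definition of an ordered semigroup, yields $a^{k}\cdot a\leq a^{k+1}\cdot a$, i.e.\ $a^{k+1}\leq a^{k+2}$, closing the induction.

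Finally I would assemble the conclusion: given arbitrary $m,n\in\mathbb{N}$ with $m\leq n$, I chain the inequalities $a^{m}\leq a^{m+1}\leq\cdots\leq a^{n}$ and invoke transitivity of $\leq$. There is essentially no obstacle in this argument; the only subtlety worth flagging is to be explicit that compatibility of $\leq$ with multiplication is the sole structural property used, so the claim depends only on the defining condition $e\leq e^{2}$ for all $e\in S$ and not on any further properties of idempotent ordered semigroups developed later in the section.
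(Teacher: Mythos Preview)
Your argument is correct and is exactly the natural one: from $a\leq a^{2}$ and compatibility of $\leq$ with multiplication you obtain $a^{k}\leq a^{k+1}$ for all $k\geq 1$ by induction, and then transitivity gives $a^{m}\leq a^{n}$ whenever $m\leq n$. The paper in fact states this lemma without proof, so there is nothing to compare; your write-up supplies the routine verification the authors left implicit.
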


Every idempotent ordered  semigroup $S$ is completely regular and
hence $\jc$ is the least complete semilattice congruence on $S$,  by
Lemma \ref{cr14}. In an idempotent ordered semigroup $S$, the
Green's relation $\jc$ can equivalently be expressed as:
$\textrm{for} \;a, b \in S$, $$a \jc b \;\textrm{if there are} \;x,
y, u, v \in S \;\textrm{such that} \;a \leq axbya \;\textrm{and} \;b
\leq buavb.$$

Now we characterize the $\jc-$class  in an idempotent ordered
semigroup.
\begin{Definition}
An idempotent ordered semigroup $S$ is called rectangular if for all
$a,b \in S$, there are $ x,y \in S $ such that $a\leq axbya $.
\end{Definition}
\begin{Example}
$(\mathbb{N} ,\cdot, \leq)$ is a rectangular idempotent ordered
semigroup,whereas if we define  $a\;o \;b= \;min \{a,b \} \;for
\;all \;a,b \in \mathbb{N}$ then $(\mathbb{N},\circ, \leq)$ is an
idempotent ordered semigroup but not rectangular.
\end{Example}
Also we have the following equivalent conditions.
\begin{Lemma}\label{cr18} Let $S$ be an idempotent ordered
semigroup. Then the following conditions are equivalent:
\begin{enumerate}
  \item \vspace{-.4cm}
$S$ is  rectangular;
\item \vspace{-.4cm}
for all $a,b \in S$, there is  $x \in S$ such that $a \leq axbxa$;
\item \vspace{-.4cm}
for all $a, b, c \in S$ there is  $x \in S$ such that $ac \leq
axbxc$.
\end{enumerate}
\end{Lemma}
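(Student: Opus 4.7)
I plan to establish the equivalence of the three conditions via the cycle $(2)\Rightarrow(1)\Rightarrow(3)\Rightarrow(2)$, with the genuine content concentrated in $(1)\Rightarrow(3)$.

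The implication $(2)\Rightarrow(1)$ is immediate: choosing $y=x$ in the inequality of $(2)$ already satisfies the definition of rectangular.

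For $(3)\Rightarrow(2)$ I specialize condition $(3)$ to the triple $(a,b,a)$; this yields some $x\in S$ with $a\cdot a\leq axbxa$, i.e.\ $a^{2}\leq axbxa$. Lemma \ref{cr17} gives $a\leq a^{2}$, so transitivity of $\leq$ produces $a\leq axbxa$, which is $(2)$.

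The substantive direction is $(1)\Rightarrow(3)$. Given $a,b,c\in S$, I would first apply rectangularity to the pair $(ac,b)$ to obtain $x_{0},y_{0}\in S$ with
\[
ac \;\leq\; (ac)\,x_{0}\,b\,y_{0}\,(ac) \;=\; a\,(cx_{0})\,b\,(y_{0}a)\,c,
\]
which is already very close to the desired form $axbxc$, the only defect being that the factors $cx_{0}$ and $y_{0}a$ flanking $b$ need not coincide. To force a common witness, I plan to invoke rectangularity a second time on an auxiliary pair---natural candidates are $(a,\,cx_{0}by_{0}a)$ or $(ac,\,y_{0}ac x_{0}b)$---and then chain the resulting inequality with the one above, using Lemma \ref{cr17} (so that $a^{m}\leq a^{n}$ whenever $m\leq n$) to absorb the asymmetry between $cx_{0}$ and $y_{0}a$ into a single product $x\in S$.

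The main obstacle is this symmetrization step: selecting the correct auxiliary pair and verifying that after substitution the iterated inequalities telescope to yield $ac\leq axbxc$ with one and the same element $x$ on both sides of $b$. I expect the witness to be a product built from $cx_{0}$, $b$, $y_{0}a$, and powers of $a$ or $c$, and the verification to reduce, via Lemma \ref{cr17}, to routine monotonicity of the product.
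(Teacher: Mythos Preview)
Your cycle $(2)\Rightarrow(1)\Rightarrow(3)\Rightarrow(2)$ and your arguments for $(2)\Rightarrow(1)$ and $(3)\Rightarrow(2)$ are exactly what the paper does.

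Where you diverge is $(1)\Rightarrow(3)$, and this is also where your proposal is incomplete. The paper does \emph{not} apply rectangularity to the pair $(ac,b)$, and it does \emph{not} invoke rectangularity a second time. Instead it applies rectangularity once to the pair $(a,b)$, obtaining $x,y\in S$ with $a\le axbya$, right-multiplies by $c$ to get $ac\le axbyac$, and then uses nothing but the ordered-idempotent law $e\le e^{2}$ on successive subwords (first $bya$, then $axbyab$, together with $a\le a^{2}$) to inflate the word until the two flanks of $b$ coincide. The explicit common witness the paper produces is
\[
t \;=\; axbyabya,
\]
yielding $ac\le atbtc$.

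So your ``main obstacle'' --- the symmetrization --- is precisely the content of the implication, and you have left it as a plan rather than a proof: you neither fix the auxiliary pair nor check that anything telescopes. More to the point, the heavier tool you reach for (a second rectangularity call) is unnecessary; Lemma~\ref{cr17} alone does the job once you have a single inequality $a\le axbya$. If you want to salvage your starting point $ac\le (ac)x_{0}b y_{0}(ac)$, note that the same subword-squaring trick applies there too, so the right move is to drop the second rectangularity application and carry out the idempotent expansions explicitly.
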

\begin{proof}
$(1) \Rightarrow (3)$: Let $a,b,c \in S$. Then there are $x, y \in S
$ such that $a \leq axbya$. This implies that $ ac  \leq axbyac \leq
ax(bya)(bya)c \leq (axbyab)(axbyab)yac \leq a(axbyabya)b(axbyabya)c
\leq atbtc , \;\textrm{where} \;t=axbyabya \in S$.

$(3)\Rightarrow (2)$: Let $a,b \in S$. Then there is $x \in S$ such
that $a^{2} \leq axbxa$. Then  $a \leq a^2$ implies that $a \leq
axbxa$.

$(2)\Rightarrow (1)$: This follows directly.
\end{proof}
As we can expect, we show that the equivalence classes in an
idempotent ordered semigroup $S$ determined by $\jc$ are
rectangular.
\begin{Theorem}\label{cr19}
Every idempotent ordered semigroup  is a complete semilattice of
rectangular idempotent ordered semigroups.
\end{Theorem}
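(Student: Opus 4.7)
The plan is to use the semilattice decomposition coming from the least complete semilattice congruence on $S$ and then verify that each block is itself a rectangular idempotent ordered semigroup. First I would invoke the fact already noted in the paragraph above: since $S$ is completely regular, Lemma \ref{cr14} gives that $\jc$ is the least complete semilattice congruence on $S$. Via the equivalent formulation of complete semilattice congruence recalled in Section~2, this yields a semilattice $Y=S/\jc$ together with a family $\{S_\alpha\}_{\alpha\in Y}$ of pairwise disjoint subsemigroups covering $S$, with $S_\alpha S_\beta\subseteq S_{\alpha\beta}$ and the order-compatibility clause. Since $\alpha\alpha=\alpha$ in $Y$, each $S_\alpha$ is closed under the product, and with the induced order it is automatically an idempotent ordered semigroup.

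The remaining task is to show each $S_\alpha$ is rectangular. Fix $a,b\in S_\alpha$. Because $a\jc b$, the alternative description of $\jc$ displayed just before the definition of rectangularity supplies $x,y\in S$ with $a\leq axbya$. The main (and essentially only) obstacle is that $x$ and $y$ a priori sit in $S$ rather than in $S_\alpha$, whereas rectangularity of the subsemigroup $S_\alpha$ requires witnesses inside $S_\alpha$.

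To overcome this I would cube the inequality. Lemma \ref{cr17} gives $a\leq a^{3}$, so order-compatibility lets me iterate $a\leq axbya$ to obtain
$$a \;\leq\; (axbya)^{3} \;=\; a\,(xbya^{2}x)\,b\,(ya^{2}xby)\,a.$$
Setting $u=xbya^{2}x$ and $v=ya^{2}xby$, the task reduces to checking that $u,v\in S_\alpha$. Applying the complete semilattice property to $a\leq axbya$ gives $a\jc a\cdot(axbya)=a^{2}xbya$, so in $Y$ we have $\alpha=\alpha[x][y]$. Computing the $\jc$-classes of $u$ and $v$ in $Y$, and using that semilattice multiplication is idempotent and commutative, then yields $[u]=[v]=\alpha[x][y]=\alpha$, placing $u,v\in S_\alpha$ and establishing rectangularity of $S_\alpha$.
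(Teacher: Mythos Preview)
Your proposal is correct and follows essentially the same route as the paper: invoke that $\jc$ is the least complete semilattice congruence, then for $a,b$ in the same $\jc$-class use the description $a\leq axbya$ together with completeness to manufacture rectangularity witnesses lying in that class. The only cosmetic difference is the expansion step: the paper inflates $axbya$ via $a\leq a^{2}$ and $b\leq b^{3}$ to obtain $a\leq a(axb)\,b\,(bya)a$ with witnesses $u=axb$, $v=bya$, whereas you cube the inequality to get $u=xbya^{2}x$, $v=ya^{2}xby$; in both cases the completeness clause then forces $[u]=[v]=\alpha$.
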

\begin{proof}
Let $S$ be an idempotent ordered semigroup. Then $\jc$ is the least
complete semilattice congruence on $S$. Now consider a $\jc$-class
$(c)_{\jc}$  for some $c \in S$. Since $\jc$ is a complete
semilattice congruence, $(c)_{\jc}$ is a subsemigroup of $S$. Let
$a, b \in (c)_{\jc}$. Then there is $x \in S$ such that $a \leq
axbxa $, which implies that $a \leq a(axb)b(bxa)a$, that is,  $a
\leq aubva$ where $u=axb \;\textrm{and} \;v= bxa$. Also  the
completeness of $\jc$ implies that $ (a)_{\jc}  = (a^2xbxa)_{\jc} =
(axb)_{\jc} = (bxa)_{\jc} $, and  $u, v \in (c)_{\jc}$. Thus
$(c)_{\jc}$ is a rectangular idempotent ordered semigroup.
\end{proof}
\begin{Definition}
An  idempotent ordered semigroup  $S$ is called left zero  if for
every $a,b \in S$, there exists $x \in S$ such that  $a \leq axb $.
\end{Definition}
An  idempotent ordered semigroup $S$ is called right zero  if for
every $a,b \in S$, there is $x \in S$ such that $a \leq bxa $.

Now it is evident that every left zero  and right zero idempotent
ordered semigroup is rectangular.

\begin{Proposition}\label{cr20}
An  idempotent ordered semigroup is left zero if and only if it is
left simple.
\end{Proposition}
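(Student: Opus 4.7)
The plan is to prove both implications directly from the definitions, using only the idempotent inequality $a \leq a^2$ together with the standard manipulation of left ideals and down-sets.

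For the forward direction (left zero implies left simple), I would take an arbitrary left ideal $L$ of $S$ and show $L = S$. Pick any $a \in L$ and let $b \in S$. Applying the left-zero condition to the pair $(b, a)$ produces $x \in S$ with $b \leq bxa$. Since $L$ is a left ideal, $bxa \in Sa \subseteq SL \subseteq L$, and the equality $(L] = L$ then forces $b \in L$. As $b$ was arbitrary, $L = S$, so $S$ is left simple.

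For the converse, I would fix arbitrary $a, b \in S$ and first verify that $(Sb]$ is a left ideal of $S$: the inclusions $S(Sb] \subseteq (S \cdot Sb] \subseteq (Sb]$ and $((Sb]] = (Sb]$ are immediate, and $(Sb]$ is nonempty. Left simplicity then forces $(Sb] = S$, so there exists $s \in S$ with $a \leq sb$. Now I would invoke idempotency: multiplying $a \leq sb$ on the left by $a$ yields $a^2 \leq a \cdot sb = asb$, and combining with $a \leq a^2$ gives $a \leq asb$. Taking $x = s \in S$ confirms the defining condition for the left-zero property.

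The only real obstacle is the converse, where one must bridge the gap between the weak information $a \leq sb$ (which is all that left simplicity supplies) and the sharper form $a \leq axb$ demanded by the left-zero definition. The trick is to exploit $a \leq a^2$ to insert an extra factor of $a$ on the left of $sb$ without disturbing the inequality; this is exactly the maneuver the idempotent hypothesis is designed to permit, so the proof should go through cleanly.
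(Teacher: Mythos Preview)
Your proposal is correct and follows essentially the same argument as the paper: both directions use the left-zero inequality $b \leq bxa$ to show that $(Sa]$ (equivalently, any left ideal containing $a$) is all of $S$, and for the converse both obtain $a \leq sb$ from left simplicity and then upgrade it to $a \leq asb$ via $a \leq a^{2}$. The only cosmetic difference is that the paper phrases the forward direction in terms of the principal left ideal $(Sa]$ rather than an arbitrary left ideal $L$.
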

\begin{proof}
First suppose that  $S$ is  a left zero idempotent ordered semigroup
and $a \in S$. Then for any $b \in S$, there is $x \in S$ such that
$b \leq bxa$, which shows that $b \in (Sa]$. Thus $S=(Sa] $ and
hence $S$ is left simple.

Conversely, assume that $S$ is left simple. So for every $a,b \in
S$, there is $s \in S$ such that $a \leq sb$. Then $a \leq a^2$
gives that $a^2 \leq asb$. Thus $S$ is a left zero idempotent
ordered semigroup.
\end{proof}

\begin{Lemma}\label{cr21}
In an idempotent ordered semigroup $S$, the following conditions are
equivalent:
\begin{enumerate}
\item \vspace{-.4cm}
For all $a, b \in S$, there is $x\in S$ such that  $ab \leq ab x
ba$;
\item \vspace{-.4cm}
For all $a, b \in S$, there is $x\in S$ such that  $ab \leq axbxa$;
\item \vspace{-.4cm}
For all $a, b \in S$, there is $x, y\in S$ such that  $ab \leq
axbya$.
\end{enumerate}
\end{Lemma}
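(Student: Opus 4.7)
The implication $(2) \Rightarrow (3)$ is immediate: setting $y = x$ in the hypothesis of (2) yields (3). The plan for the remaining directions uses the idempotency relations $a \leq a^{n}$ and $b \leq b^{n}$ supplied by Lemma~\ref{cr17}.

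For $(1) \Rightarrow (3)$, given $ab \leq abxba$, I would apply $b \leq b^{2}$ to the right-hand middle $b$ to obtain
\[
ab \leq abxba \leq abxb^{2}a = a \cdot (bx) \cdot b \cdot b \cdot a,
\]
which is (3) with $x' = bx$ and $y' = b$.

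The substantive direction is $(3) \Rightarrow (1)$; from this, $(3) \Rightarrow (2)$ can be extracted by a parallel rearrangement. My plan is first to square the hypothesis: by idempotency, $ab \leq (ab)^{2}$, and by (3), $ab \leq axbya$, so
\[
ab \leq (ab)^{2} \leq (axbya)^{2} = a \, (xby) \, a^{2} \, (xby) \, a.
\]
This is palindromic and very close to the shape required by (2), the only defect being that the central slot holds $a^{2}$ rather than the desired $b$. To convert into the asymmetric pattern $ab \cdot t \cdot ba$ demanded by (1), I would also apply (3) to the pair $(b,a)$ to obtain $ba \leq b u a v b$, and substitute: $(ab)^{2} = a(ba)b \leq a(buavb)b = ab \cdot (uav) \cdot b^{2}$. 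Combining the two squared expressions and using $a \leq a^{2}$ and $b \leq b^{2}$ repeatedly to absorb redundant factors, the resulting long product should be realisable as $ab \cdot t \cdot ba$ for a suitable $t$ built from $u, v, x, y$.

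The hard part will be the bookkeeping: arranging the factors so that the outer positions of the final five-factor product are exactly $ab$ on the left and $ba$ on the right, while the interior collapses into a single element of $S$. This requires a careful choreography of the substitutions supplied by (3) applied to the two pairs $(a,b)$ and $(b,a)$, with Lemma~\ref{cr17} invoked repeatedly to reshape the intermediate expressions into the target pattern.
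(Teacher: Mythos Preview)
Your easy directions are fine: $(2)\Rightarrow(3)$ is trivial, and your $(1)\Rightarrow(3)$ via $b\le b^{2}$ is correct. The problem is the return trip. Your plan for $(3)\Rightarrow(1)$ is to apply (3) only to the pairs $(a,b)$ and $(b,a)$, obtaining $ab\le axbya$ and $ba\le buavb$, and then ``combine'' using idempotency. But every word you can reach from $(ab)^{n}$ by such substitutions and by expansions $c\le c^{2}$ will still end in a single $a$ or a single $b$ (indeed in $ab$, $b$, or in $ya$), never in the two-letter suffix $ba$ that condition~(1) demands. The difficulty is not bookkeeping; with only those two instances of (3) there is no way to manufacture a bound of the shape $ab\cdot t\cdot ba$. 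The missing idea is to instantiate the universally quantified hypothesis at a pair whose \emph{first} argument already ends in $ba$.

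The paper organises the equivalence as the cycle $(1)\Rightarrow(3)\Rightarrow(2)\Rightarrow(1)$, and its decisive step is exactly this trick: for $(2)\Rightarrow(1)$ one applies (2) to the pair $(ba,b)$ to get $bab\le ba\,x\,b\,x\,ba$, and then
\[
ab \;\le\; (ab)^{3} \;=\; aba\,(bab) \;\le\; aba\cdot ba\,x\,b\,x\,ba \;=\; ab\,(abaxbx)\,ba,
\]
which is (1). The same device works if you want to go straight from (3) to (1): apply (3) to $(ba,b)$ to obtain $bab\le ba\,s\,b\,t\,ba$ and substitute into $ab\le aba(bab)$. For $(3)\Rightarrow(2)$ the paper refers to the manipulation of Lemma~\ref{cr18}, which symmetrises the two witnesses $x,y$ into a single one by repeated squaring; your sketch of ``a parallel rearrangement'' does not supply this either. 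In short, the one genuine idea in this lemma is to feed a \emph{compound} element such as $ba$ into the hypothesis, and that is what your proposal is missing.
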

\begin{proof}
$(1)\Rightarrow (3)$: This follows directly.

$(3)\Rightarrow (2)$: This is similar to the Lemma \ref{cr18}.

$(2)\Rightarrow (1)$: Let  $a, b \in S$. Then there is $x \in S$
such that $bab   \leq   baxbxba$. Now since  $ab \leq ababab,
\;\textrm{ we have} \;ab \leq ab(abaxbx)ba$.
\end{proof}
 We now introduce the notion of left regularity in an idempotent
 ordered semigroup.
\begin{Definition}
An idempotent ordered semigroup  $S$ is called left regular if for
every $a, b \in S$ there is $x \in S$ such that $ ab \leq axbxa $.
\end{Definition}

\begin{Theorem}\label{cr22}
An  idempotent ordered semigroup $S$ is left regular if and only if
$\lc= \jc$ is the least complete semilattice congruence on $S$.
\end{Theorem}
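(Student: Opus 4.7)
The plan is to prove the two directions separately, leaning on the preceding lemmas.

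For the forward direction, assume $S$ is left regular. Since $\jc$ is already the least complete semilattice congruence on $S$ (by Lemma~\ref{cr14}), it suffices to establish $\lc = \jc$. The inclusion $\lc \subseteq \jc$ holds in any ordered semigroup. For $\jc \subseteq \lc$, take $a \jc b$ and invoke the characterization displayed just before Definition~\ref{cr18} to obtain $x, y, u, v \in S$ with $a \leq axbya$ and $b \leq buavb$. Parse $axbya$ as $(axb)(ya)$ and apply left regularity to the pair $(axb, ya)$, producing $z \in S$ with $(axb)(ya) \leq (axb)\,z\,(ya)\,z\,(axb)$. The right side terminates in $axb \in Sb$, so the whole upper bound lies in $Sb$; hence $a \in (Sb] \subseteq (S^1 b]$. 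The symmetric argument gives $b \in (S^1 a]$, so $a \lc b$.

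For the reverse direction, assume $\lc = \jc$. The strategy is to verify condition (1) of Lemma~\ref{cr21}, whose equivalence with condition (2) is precisely the definition of left regularity. The crucial intermediate step is that each $\jc$-class $T$ is itself a left simple idempotent ordered semigroup. Given $a, b \in T$, the hypothesis $\lc = \jc$ gives $a \leq sb$ for some $s \in S^1$. If $s = 1$, then $t := b \in T$ yields $a \leq b \leq b^2 = tb$. If $s \in S$, the completeness of the semilattice congruence $\jc$ forces $sb$ to lie in $T$ (since $a \in T$ and $a \leq sb$), and then $t := sb \in T$ gives $a \leq sb \leq sb^2 = tb$. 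Hence $T$ is left simple, and Proposition~\ref{cr20} applied to $T$ makes it left zero. Applying this to the pair $ab, aba$, which share the common $\jc$-class of $ab$, I obtain $x \in S$ with $ab \leq ab\cdot x\cdot aba = ab\cdot (xa)\cdot ba$, which is exactly condition (1) of Lemma~\ref{cr21}. The lemma then delivers left regularity.

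I expect the main difficulty to be the reverse direction, specifically the passage from $\lc = \jc$ to left simplicity of each $\jc$-class. The subtle point is that the definition of $\lc$ via $(S^1 \cdot]$ only bounds $a$ by $sb$ with $s \in S^1$; converting this into a witness $t \in T$ with $a \leq tb$ requires knowing that $sb$ itself lies in $T$, and the completeness property of the semilattice congruence is exactly what pins $sb$ down to $T$ when $s \in S$.
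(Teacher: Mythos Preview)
Your proof is correct. The forward direction matches the paper's: both apply left regularity to a suitable factorization of the upper bound for $a$ coming from $a\jc b$, pushing a copy of $b$ to the right end so that $a\in(Sb]$; the only cosmetic difference is that you use the idempotent-specific description $a\le axbya$ while the paper uses the generic $a\le ubv$.

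The reverse direction, however, takes a genuinely different route. The paper argues in two lines: since $\lc$ is assumed to be a semilattice congruence, $ab\lc ba$, so $ab\le xba$ for some $x$, and then $ab\le abab\le ab\,x\,ba$, which is condition~(1) of Lemma~\ref{cr21}. You instead prove that every $\jc$-class $T$ is left simple (hence left zero by Proposition~\ref{cr20}) and then read off condition~(1) inside the class of $ab$. This is correct and effectively previews the structural content of Theorem~\ref{cr23}, but it is considerably longer than the paper's direct argument. Two small remarks on your version: the parenthetical ``since $a\in T$ and $a\le sb$'' is not by itself enough to force $sb\in T$; you must also use $b\in T$ (so that $sb$ already lies in a component below that of $T$, while completeness and $a\le sb$ give the reverse comparison). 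And you may apply the left-zero property directly to the pair $ab,\,ba$ rather than $ab,\,aba$, saving the extra rewriting step.
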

\begin{proof}
First we assume that $S$ is left regular. Let $a, b \in S$ be such
that $a \jc b$. Then there are $u, v, x, y \in S $ such that $a \leq
ubv \;\textrm{and} \;b \leq xay$. Since  $S$ is left regular, there
are $s, t \in S$ such that $bv \leq bsvsb \;\textrm{and} \;ay \leq
atyta$. Then $a \leq ubsvsb \;\textrm{and} \;b \leq xatyta $; which
shows that $a \lc b$. Thus $\jc \subseteq \lc$. Again $\lc \subseteq
\jc$ on every ordered semigroup and hence $\lc = \jc$. Since every
idempotent ordered semigroup is completely regular, it follows that
$\lc$ is the least complete semilattice congruence on $S$, by Lemma
\ref{cr14}

Conversely, let $\lc$ is the least complete semilattice congruence
on $S$. Consider $a, b \in S$. Then $ab \lc ba$ implies   that $ab
\leq xba$ for some $x \in S$. This implies that $$ab \leq abab \leq
abxba.$$ Thus $S$ is a left regular idempotent ordered semigroup, by
Lemma \ref{cr21}.
\end{proof}
In the following result a left regular idempotent ordered semigroup
has been characterized by left zero idempotent ordered semigroup.
\begin{Theorem}\label{cr23}
Let  $S$ be an idempotent ordered semigroup. Then the following
conditions are equivalent:
\begin{enumerate}
\item \vspace{-.4cm}
$S$ is left regular;
\item \vspace{-.4cm}
$S$ is a complete semilattice  of left zero idempotent ordered
semigroups ;
\item \vspace{-.4cm}
$S$ is a semilattice  of left zero idempotent ordered semigroups.
\end{enumerate}
\end{Theorem}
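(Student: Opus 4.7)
The plan is to prove the cycle $(1)\Rightarrow (2)\Rightarrow (3)\Rightarrow (1)$.

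For $(1)\Rightarrow (2)$, the starting point is Theorem \ref{cr22}, which identifies $\lc = \jc$ as the least complete semilattice congruence on $S$. This immediately gives a complete semilattice decomposition $S = \bigcup_{\alpha\in Y} T_\alpha$ into $\jc$-classes, and Theorem \ref{cr19} shows each $T_\alpha$ is rectangular. What remains is the upgrade from rectangular to left zero. Given $a, b\in T_\alpha$, I will use $a\,\lc\,b$ to produce $s\in S^1$ with $a\leq sb$, and then iterate the relation $a\leq a^2$ to obtain
\[
a \;\leq\; (asb)(asb) \;=\; a\,(sbas)\,b.
\]
The witness for the left zero property will be $x = sbas$; the key step is to verify that $x$ lies in $T_\alpha$. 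Completeness yields $a\,\jc\,asb$, so $(asb)_\jc = \alpha$; combined with $(a)_\jc = (b)_\jc = \alpha$ and the commutativity and idempotence in the quotient semilattice $Y$, this forces $(s)_\jc \cdot \alpha = \alpha$, i.e.\ $\alpha$ lies below the class of $s$. From here $(sbas)_\jc = (s)_\jc^{\,2}\,\alpha = (s)_\jc\,\alpha = \alpha$, as required. The degenerate case $s = 1$ (i.e.\ $a\leq b$) is disposed of by $a \leq a^3 \leq a\cdot a\cdot b$, taking $x = a \in T_\alpha$.

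The implication $(2)\Rightarrow (3)$ is immediate, since every complete semilattice decomposition is, in particular, a semilattice decomposition.

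For $(3)\Rightarrow (1)$, assume $S = \bigcup_{\alpha\in Y} S_\alpha$ is a semilattice of left zero idempotent ordered semigroups, and take $a,b\in S$ with $a\in S_\alpha$, $b\in S_\beta$. Both $ab$ and $ba$ lie in $S_{\alpha\beta}$, so the left zero property of that component furnishes $x\in S_{\alpha\beta}\subseteq S$ with $ab\leq (ab)\,x\,(ba)$. Lemma \ref{cr21} then concludes that $S$ is left regular. The main obstacle of the whole argument is the passage $(1)\Rightarrow (2)$: the placement of the witness $x$ inside the same $\jc$-class. Rectangularity alone does not suffice, and it is specifically the completeness clause of the semilattice congruence (not merely the semilattice property) that pins down $(sbas)_\jc$ in terms of $(a)_\jc$. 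The remaining implications are routine manipulations of ordered idempotents combined with the preceding lemmas.
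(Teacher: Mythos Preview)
Your proof is correct and follows essentially the same route as the paper: in $(1)\Rightarrow(2)$ you invoke Theorem~\ref{cr22} to make $\lc=\jc$ the complete semilattice congruence and then use completeness to locate the left-zero witness inside the $\lc$-class, and in $(3)\Rightarrow(1)$ you exploit $ab,ba\in S_{\alpha\beta}$ together with Lemma~\ref{cr21}, exactly as the paper does (the paper just unpacks that lemma by hand). The only cosmetic differences are that your detour through Theorem~\ref{cr19} (rectangularity) is never actually used, and your witness $x=sbas$ is a bit bulkier than the paper's choice $u=axb$, which already lands in the right class by a single application of completeness to $a\leq xb$.
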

\begin{proof}

$(1)\Rightarrow (2)$: In view of Theorem \ref{cr22}, it is
sufficient to show that each $\lc$-class is a left zero idempotent
ordered semigroup. Let $ L$ be an $\lc$-class  and $a, b \in L$.
Then $L$ is a subsemigroup, since $\lc$ is a semilattice congruence.
Since $a \lc b$  there is $x \in S$ such that $a \leq xb$. This
implies that  $a \leq a^3 \leq a^2 xb \leq a^2 xb^2 \leq aub,
\;\textrm{where} \;u=axb$.

By the completeness of $\lc, \;a \leq xb$ implies that $(a)_{\lc}=
(axb)_{\lc}, \;\textrm{and hence} \;u \in L$. Thus $S$ is left zero
idempotent ordered semigroup.

$(2)\Rightarrow(3)$: Trivial.

$(3)\Rightarrow(1)$: Let $\rho$ be a semilattice congruence on $S$
such that each $\rho$-class is a left zero idempotent ordered
semigroup. Consider $a, b \in S$. Then $ab, \;ba \in (ab)_{\rho}$
shows that there is $s \in (ab)_{\rho}$ such that $ab \leq ab s ba
\leq a bsbs ba \leq a (bsb) b (bsb) a$. Hence $S$ is left regular.
\end{proof}
 The following theorem gives several alternative
characterizations of an $\hc-$commutative idempotent ordered
semigroup.
\begin{Lemma}\label{cr24}
Let $S$ be an idempotent ordered semigroup. Then the following
conditions are equivalent:
\begin{enumerate}
\item \vspace{-.4cm}
$S$ is $\hc$-commutative;
\item \vspace{-.4cm}
for all $a,b\in S, \;ab \in (baS] \cap (Sba];$
\item \vspace{-.4cm}
 $S$ is a complete semilattice of t-simple idempotent ordered
 semigroups;
\item \vspace{-.4cm}
 $S$ is a semilattice of t-simple idempotent ordered
 semigroups.
\end{enumerate}
\end{Lemma}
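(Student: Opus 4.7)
My plan is a cyclic chain: establish $(1) \Leftrightarrow (2)$ directly, then $(1) \Rightarrow (3) \Rightarrow (4) \Rightarrow (1)$.

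For $(1) \Rightarrow (2)$ I would exploit $ab \leq (ab)^{2}$ from Lemma \ref{cr17} and reparse $abab$ in two ways. Writing $abab = a(ba)b$ and applying $\hc$-commutativity to the pair $(a, ba)$ gives some $t \in S$ with $a(ba) \leq (ba)ta$, whence $ab \leq ba(tab) \in (baS]$. Reparsing instead as $a((ba)b)$ and applying $\hc$-commutativity to $(ba, b)$ yields $t' \in S$ with $(ba)b \leq bt'(ba)$, whence $ab \leq (abt')ba \in (Sba]$. The reverse $(2) \Rightarrow (1)$ is then a one-line substitution: from $ab \leq bas$ and $ab \leq tba$ we get $ab \leq (ab)^{2} \leq (bas)(tba) = b(astb)a \in (bSa]$.

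For $(1) \Rightarrow (3)$, $\jc$ is the least complete semilattice congruence on $S$ (Lemma \ref{cr14}), so every $\jc$-class $T = (c)_{\jc}$ is an idempotent ordered subsemigroup. To show $T$ is t-simple, take $a, b \in T$; a witness $a \leq xby$ with $x, y \in S^{1}$ rewrites, via $\hc$-commutativity applied to the pair $(xb, y)$, as $a \leq y \cdot u \cdot xb$, whence $a \leq a^{2} \leq (ayux)b$. Put $z = ayux$; completeness of $\jc$ applied to $a \leq zb$ gives $(a)_{\jc} = (azb)_{\jc} = (a)_{\jc}(z)_{\jc}(b)_{\jc}$, and since $(b)_{\jc} = (a)_{\jc}$ this forces $(a)_{\jc} \preceq (z)_{\jc}$, so $az \in T$ and $a \leq (az)b$ with both factors in $T$. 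Hence $T$ is left simple; the symmetric rewrite, parsing $xby = x(by)$, gives right simple. Thus every $\jc$-class is t-simple. The implication $(3) \Rightarrow (4)$ is immediate.

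For $(4) \Rightarrow (1)$, let $S = \bigcup_{\alpha \in Y} S_{\alpha}$ be the semilattice decomposition, and let $a \in S_{\alpha}$, $b \in S_{\beta}$; then $ab, ba, aba, bab$ all lie in $S_{\alpha\beta}$. Since $S_{\alpha\beta}$ is t-simple, Proposition \ref{cr20} (and its right-handed analogue) makes it both left zero and right zero. The left-zero property applied inside $S_{\alpha\beta}$ to the pair $(ab, aba)$ gives $x \in S_{\alpha\beta}$ with $ab \leq ab \cdot x \cdot aba$; the right-zero property applied to $(abxaba, bab)$ gives $y \in S_{\alpha\beta}$ with $abxaba \leq bab \cdot y \cdot abxaba$. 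Chaining the two inequalities, $ab \leq b(abyabxab)a \in (bSa]$, so $S$ is $\hc$-commutative.

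I expect the main obstacle to be $(4) \Rightarrow (1)$: the target word $bsa$ has $b$ on the left and $a$ on the right, opposite the natural factorisation of $ab$, so one must manufacture those endpoints by inserting the auxiliary stepping-stones $aba$ and $bab$ (both in $S_{\alpha\beta}$) and then apply left zero once and right zero once in the correct order. Once these intermediates are chosen, the word manipulation is mechanical; the other implications are routine once Lemma \ref{cr17} and the completeness of $\jc$ are in hand.
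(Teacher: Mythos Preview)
Your argument is correct and follows the same overall architecture as the paper: use $\jc$ as the complete semilattice congruence, show each class is t-simple, and recover $\hc$-commutativity from the semilattice-of-t-simple decomposition. The differences are only in the word manipulations.

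Two points of comparison are worth recording. First, the paper runs a single cycle $(1)\Rightarrow(2)\Rightarrow(3)\Rightarrow(4)\Rightarrow(1)$; for $(2)\Rightarrow(3)$ it invokes the rectangularity of $\jc$-classes (Theorem~\ref{cr19}) to obtain $a\leq axbxa$ with $x\in J$, then uses condition~(2) on $bxa$ to push $b$ to the right. Your route from (1) directly, via completeness of $\jc$ to place $az$ back in the class, is equally clean. Second, the step you flag as the main obstacle, $(4)\Rightarrow(1)$, is handled more simply in the paper without the stepping-stones $aba,\,bab$: since $ab,\,ba\in S_{\alpha\beta}$ already, left simplicity yields $ab\leq xba$ for some $x\in S_{\alpha\beta}$, and then right simplicity applied to $x$ and $ba$ yields $x\leq bay$, whence $ab\leq bayba=b(ayb)a$. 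So the endpoint letters $b,a$ come for free from $ba$ itself, and Proposition~\ref{cr20} is not needed.

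One small technicality: in your $(1)\Rightarrow(3)$ you take witnesses $x,y\in S^{1}$ and then apply $\hc$-commutativity to $(xb,y)$, which is only defined for elements of $S$. This is harmless here because the paper's description of $\jc$ on an idempotent ordered semigroup already gives $a\leq axbya$ with $x,y\in S$, but you should invoke that (or use $a\leq a^{3}$ to absorb the identities) explicitly.
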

\begin{proof}
$(1)\Rightarrow (2)$: Consider  $a,b\in S$. Since $S$ is $\hc-$
commutative,  there is\\
$u \in S \;\textrm{such that} \;ab \leq bua $. Also  for $u,a \in S,
\;ua \leq asu$ for some $s \in S$. Thus $ab \leq basu$, which shows
that $ab \in (baS]$. Similarly $ab \in (Sba]$. Hence $ab \in
(baS]\cap(Sba]$.

$(2)\Rightarrow (3)$: Suppose that $J$ be an $ \jc$-class in $S$ and
$a, b \in J$. Since $J$ is rectangular there is $x \in J$ such that
$a \leq axbxa$. Also by the given condition $(2)$ there is $u \in J$
such that $bxa \leq xaub$. So $a \leq ax^2aub \leq vb
\;\textrm{where} \;v= ax^2au$. Since $\jc$ is a complete semilattice
congruence on $S,
 \;(a)_{\jc}=(a^2x^2aub)_{\jc}= (ax^2au)_{\jc}= (v)_{\jc}$.
So $v \in J$. This shows that $J$ is left simple. Similarly it can
be shown that $J$ is also right simple. Thus $S$ is a complete
semilattice of t-simple idempotent ordered semigroups.

$(3)\Rightarrow (4)$: This follows trivially.

$(4)\Rightarrow (1)$: Let  $S$ be the   semilattice $Y$ of t-simple
idempotent ordered semigroups $\{S_\alpha\}_{\alpha \in Y}$ and
$\rho$ be the corresponding semilattice congruence on $S$. Then
there are $\alpha, \beta \in Y$ such that $a \in S_\alpha
\;\textrm{and} \;b \in S_\beta $. Then $ba, ab \in S_{\alpha
\beta}$. Since $S_{\alpha \beta}$ is t-simple, $ab \leq xba$ for
some $x \in S_{\alpha \beta}$. Now for $x, ba \in S_{\alpha \beta}$
there is $y \in S_{\alpha \beta}$ such that $x \leq bay$. This
finally gives $ab \leq bta$,  where $t = ayb$.
\end{proof}

\begin{Definition}
An  idempotent ordered semigroup $(S,., \leq )$ is called weakly
commutative  if for any $a,b \in S \;\textrm{there exists} \;u \in S
\; \textrm{such \;that} \; ab\leq bua $.
\end{Definition}
\begin{Theorem}
For an idempotent ordered semigroup $S$, the followings are
equivalent:
\begin{enumerate}
  \item \vspace{-.4cm}
$S$ is weakly commutative;
  \item \vspace{-.4cm}
for any $a,b\in S, \;ab \in (baS]\cap(sba];$
  \item \vspace{-.4cm}
 $S$ is complete semilattice of left and right simple idempotent ordered semigroups .
\end{enumerate}
\end{Theorem}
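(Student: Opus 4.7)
The plan is to run the cyclic implications $(1)\Rightarrow(2)\Rightarrow(3)\Rightarrow(1)$, paralleling the proof of Lemma~\ref{cr24}. Reading the ``$sba$'' in condition~(2) as $Sba$, the three conditions are formally identical to those of Lemma~\ref{cr24}: weak commutativity is just $\hc$-commutativity rephrased, and ``left and right simple'' means t-simple. So Theorem~\ref{cr19} and Lemma~\ref{cr18} will do the main work.

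For $(1)\Rightarrow(2)$, I would apply weak commutativity three times. From $(a,b)$ obtain $u$ with $ab \leq bua$; from $(u,a)$ obtain $v$ with $ua \leq avu$, giving $ab \leq (ba)(vu) \in (baS]$; and from $(b,u)$ obtain $w$ with $bu \leq uwb$, giving $ab \leq (uw)(ba) \in (Sba]$.

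For $(2)\Rightarrow(3)$, I would first derive $\hc$-commutativity from (2): pick $t$ with $ab \leq tba$ (from the $(Sba]$ half), then pick $s$ with $tb \leq bts$ (from the $(baS]$ half applied to the pair $(t,b)$), yielding $ab \leq b(ts)a$. Now Theorem~\ref{cr19} decomposes $S$ as a complete semilattice of rectangular $\jc$-classes. Fix such a class $J$ with $a,b \in J$. Lemma~\ref{cr18} supplies $x \in J$ with $a \leq axbxa$, and $\hc$-commutativity applied to $(b, xa)$ gives $u \in S$ with $bxa \leq xaub$. Substituting, $a \leq ax^2aub = vb$ with $v = ax^2au$; completeness of the $\jc$-congruence (Lemma~\ref{cr14}) forces $u, v \in J$, so $J$ is left simple. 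A symmetric argument yields right simplicity.

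For $(3)\Rightarrow(1)$, let $a \in S_\alpha$ and $b \in S_\beta$, so that $ab, ba \in S_{\alpha\beta}$. Left simplicity gives $x \in S_{\alpha\beta}$ with $ab \leq xba$, and right simplicity gives $y \in S_{\alpha\beta}$ with $x \leq bay$. Combining, $ab \leq b(ayb)a$, so $u := ayb$ witnesses weak commutativity. The main obstacle is the bookkeeping inside $(2)\Rightarrow(3)$: ensuring that every auxiliary element produced by invoking (2) lies in the $\jc$-class $J$ rather than merely in $S$, which is exactly what the completeness of the least semilattice congruence provides.
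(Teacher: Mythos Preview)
Your cyclic proof is correct and follows essentially the same route as the paper: the paper also runs $(1)\Rightarrow(2)\Rightarrow(3)\Rightarrow(1)$, uses Theorem~\ref{cr19} to reduce to a rectangular $\jc$-class, and invokes completeness of $\jc$ to pin the auxiliary element down inside $J$. One small overstatement: in your $(2)\Rightarrow(3)$ step, completeness only forces $v=ax^2au\in J$, not $u\in J$; but since you only need $a\leq vb$ with $v\in J$ for left simplicity, the argument goes through unchanged.
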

\begin{proof}
$(1)\Rightarrow (2)$: Let   $a,b\in S$. Then $\exists u \in S \;s.t
\;ab \leq bua \;\textrm{ for} \;u,a \in S,
 \;\textrm{there exists} \;z \in S \;\textrm{such that}  \;ua \leq azu$.
Thus  $ab \leq bua \leq  baza \;for \;za \in S$. So $ab \leq (baS]$.
Similarly $ab\in (Sba]$.  Hence $ab \in (baS]\cap(sba]$.

$(2)\Rightarrow (3)$:  Since $S$ is an idempotent ordered semigroup,
we have by theorem[] $\rho$ is complete semilattice congruence. We
now have to show that, for each $z\in S, J=(z)_{\rho}$ is left and
right simple. For this let us choose $a,b\in J$. Then there exists $
x,y \in S \;\textrm{such \;that} \; a \leq axbya$. So from the given
condition $bya\in (syab]$ and therefore there is  $ \;s_{1} \in S
\;\textrm{such that} \;bya \leq s_{1}yab$. Therefore $a \leq
axs_{1}yab$. Now since $\rho$ is complete semilattice congruence on
$S$, we have  $(a)_{\rho}  = (a^{2}xs_{1}yab)_{\rho} =
(axs_{1}yab)_{\rho} = (axs_{1}ya)_{\rho}$. Thus $a \leq ub, \;
\textrm{where} \;u=axs_{1}ya \in J$. Hence $J$ is left simple and
similarly it is  right simple.

$ (3)\Rightarrow (1)$: Let $S$ is complete semilattice $Y$ of left
and right simple idempotent ordered semigroups
$\{S_{\alpha}\}_{\alpha \in Y}$. Thus $S= \{S_{\alpha}\}_{\alpha \in
Y}$. Take $a,b \in S$. Then there are  $ \alpha, \beta \in Y
\;\textrm{ such \;that} \;a\in S_{\alpha}\;\textrm{and} \;b \in
S_{\beta}$. Thus $ab \in S_{\alpha \beta} $. So $ab, ba \leq
S_{\alpha \beta}$. Then there are $u,v \in S_{\alpha \beta}
\;\textrm{such that}  \;ab \leq uba \;\textrm{and} \;ab \leq bav
\;\textrm{implies} \;ab\leq ab^{2}\leq bta, \; \textrm{where} \;t=
avub$.  Hence $S$ is weakly commutative. This completes the proof.

\end{proof}

\begin{Definition}
An idempotent ordered semigroup $(S, \cdot, \leq )$ is called normal
if for any $a,b,c \in S, \; \exists \;x \in S \; \textrm{such
\;that} \; abca \leq acxba $.
\end{Definition}
\begin{Theorem}
For an idempotent ordered semigroup $S$, the followings are
equivalent:
\begin{enumerate}
  \item \vspace{-.4cm}
$S$ is normal;
  \item \vspace{-.4cm}
$aSb$ is weakly commutative,  for any $a,b\in S$;
  \item \vspace{-.4cm}
$aSa$ is weakly commutative, for any $a\in S$.
\end{enumerate}
\end{Theorem}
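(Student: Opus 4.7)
The plan is to establish the cycle $(1) \Rightarrow (2) \Rightarrow (3) \Rightarrow (1)$, with $(2) \Rightarrow (3)$ immediate on specializing $b=a$ in condition (2).

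For $(1) \Rightarrow (2)$, I fix $a,b \in S$ and pick arbitrary $p = asb$, $q = atb$ in $aSb$; the task is to produce $u \in aSb$ with $pq \leq qup$. The natural tool is the normality inequality $xyzx \leq xzwyx$, but the product $pq = asbatb$ does not yet exhibit a subword of that form. My plan is therefore to first enlarge $pq$ via idempotency (Lemma~\ref{cr17} gives $pq \leq (pq)^n$ for any $n$), producing the longer expression $asbatbasbatb$. Inside this enlarged string a substring beginning and ending in $a$, such as $asbatba$, sits naturally in the shape $a \cdot (sb) \cdot (atb) \cdot a$. One application of normality swaps the inner pieces, yielding $a(atb)w(sb)a$ for some $w \in S$; absorbing leftover $a$'s and trailing $b$'s via $a \leq a^2$ and $b \leq b^2$ then reshapes the bound into $q \cdot (awb) \cdot p$ with $u = awb \in aSb$.

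For $(3) \Rightarrow (1)$, I fix $a,b,c \in S$ and aim for $x \in S$ satisfying $abca \leq acxba$. My plan is to apply weak commutativity of $aSa$ to pairs of its elements built symmetrically from $bc$ and $cb$, say $p = abca$ and $q = acba$ (both in $aSa$), obtaining $u = ara \in aSa$ with $pq \leq qup$. Combined with an auxiliary application of weak commutativity in $aSa$ to a second pair such as $aba, aca$, and repeated use of $a \leq a^2$ from Lemma~\ref{cr17} to pad interior $a$'s so that $abca$ can be compared with the left-hand side of the first application, I expect the resulting chain of inequalities to collapse into $abca \leq ac \cdot x \cdot ba$ with $x$ a word built from $r$, the letters $a,b,c$, and the auxiliary parameters produced along the way.

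The principal obstacle throughout is the asymmetry of Lemma~\ref{cr17}: powers $a^n$ can be enlarged but never contracted. Consequently every invocation of the hypothesis deposits surplus $a$-factors that cannot be cancelled and must instead be absorbed adjacent to existing $a$'s at the boundary of the expression, or merged into the free middle parameter. Arranging this absorption to succeed is what dictates the specific pairings $(p,q)$ chosen in each implication; an incorrect choice leaves a residual $a^2$ exposed in the middle and blocks the final identification of $u \in aSb$ (respectively $x \in S$).
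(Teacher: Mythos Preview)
Your outline follows the same cycle $(1)\Rightarrow(2)\Rightarrow(3)\Rightarrow(1)$ and the same two devices (idempotent padding from Lemma~\ref{cr17} plus a block--swap coming from the hypothesis) as the paper, so the global strategy is correct. The concrete choices you sketch, however, do not close either nontrivial implication.

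For $(1)\Rightarrow(2)$: a single application of normality to the prefix $a(sb)(atb)a$ of $(pq)^{2}$ gives
\[
pq \;\le\; a\,(atb)\,w\,(sb)\,a\cdot sbatb,
\]
which begins with $a^{2}tb$ and ends with $atb$, not with $q=atb$ at the front and $p=asb$ at the back. Since, as you yourself stress, the surplus $a$ cannot be contracted and sits at the very boundary with nothing to absorb it, your ``reshape into $q(awb)p$'' step fails. The paper applies normality \emph{twice} inside $(pq)^{2}$ to obtain a bound of the form $q\,t\,p$ with $t\in S$ only, and then uses $q\le q^{2}$, $p\le p^{2}$ to pass to $q\,(qtp)\,p$ with $qtp\in aSb$; this last enlargement is the actual absorption mechanism, and it requires first reaching the shape $q\,t\,p$.

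For $(3)\Rightarrow(1)$: weak commutativity of $aSa$ applied to your pairs bounds the products $abca\cdot acba$ and $aba\cdot aca$, but neither of these products can be made to appear by padding $abca$: in any word obtained from $abca$ by iterated squaring, every $b$ is followed by $c$ and every $c$ by $a$ or $b$, so the blocks $acba$ and $aba\cdot aca$ are unavailable as contiguous factors. The paper instead selects the pairs $(abca,\,aca)$ and $(aba,\,abca)$ precisely because, after suitable squarings, $abca$ can be bounded above by a word that \emph{factors} as $(abca\cdot aca)(aba\cdot abca)$; applying weak commutativity to each factor then produces a bound beginning with $aca\cdots$ and ending with $\cdots aba$, i.e.\ of the required shape $ac\,x\,ba$. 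Your ``I expect the chain to collapse'' is exactly where the proof lives, and with your tentative pairings it does not collapse.
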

\begin{proof}
$(1)\Rightarrow (2)$:  Consider $axb,ayb \in aSb \; for \;x,y \in
S$. As $S$ is normal, $\exists u,v \in S $ such that $(axb)(ayb)
\leq   (axb)(ayb)(axb)(ayb) \leq aybuxba^{2}xbayb, \;for \;xba, yb
\in S \leq  (ayb)uxb(bay)v(a^{2}x)b, \;for \;a^{2}x, bay \in S\leq
(ayb)(uxb^{2}ayva)(axb)$.
 This implies
 $(axb)(ayb) \leq (ayb)t(axb) \leq (ayb)(ayb)t(axb)(axb),
 \;t= uxb^{2}ayva$ and thus $(axb)(ayb)\leq aybsaxb, \;where \;s= aybtaxb \in aSb$.
Thus $aSb$ is weakly commutative.

$(2)\Rightarrow (3)$: This  is obvious by taking $b=a$.

$(3)\Rightarrow (1)$: Let  $a,b,c \in S$. Then $abca, aca \in aSa$.
Since $aSa$ is weakly commutative.  Then there is $ s \in aSa$ such
that $(abca)aca  \leq acasaabca$. Now for $aba, abca \in aSa$, there is $t \in aSa$ such that \\
$abaabca \leq abcataba$. Thus $abca \leq (abca)(abca) \leq
abca^{2}ca^{2}bca \leq abca^{2}ca^{2}ba^{2}bca= (abcaaca)(abaabca)
\leq(acasa^{2}ca)(abcataba)\leq acuba; \;\textrm{where}
\;u=asa^{2}bca^{2}bcata \in S$. Hence $S$ is normal.
\end{proof}

\begin{Definition}
An idempotent ordered semigroup $(S,\cdot, \leq )$ is called left
normal (right normal) if for any $a,b,c \in S, \;\textrm{there
exists} \;x \in S$ such that $ abc \leq acxb \;(abc \leq bxac) $.
\end{Definition}

\begin{Theorem}
Let $S$ be a left normal idempotent ordered semigroup, then
\begin{enumerate}
  \item \vspace{-.4cm}
${\lc}$ is  the least complete semilattice congruence on $S$;
  \item \vspace{-.4cm}
$S$ is a complete semilattice of LZidempotent ordered semigroups.

\end{enumerate}
\end{Theorem}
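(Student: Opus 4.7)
The strategy is to reduce both conclusions to the single statement that $S$ is left regular in the sense of Definition 3.10; once this is established, (1) follows immediately from Theorem \ref{cr22} and (2) follows immediately from Theorem \ref{cr23}, so the two parts are obtained simultaneously.

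To prove that $S$ is left regular, I would use the equivalent formulation given by condition (3) of Lemma \ref{cr21}, namely that for each $a,b \in S$ there exist $x,y \in S$ with $ab \leq axbya$. I plan to construct such $x,y$ by invoking the left normal condition twice on two carefully chosen triples and splicing the resulting inequalities. First, applying left normality to $(a,a,b)$ produces some $x_1 \in S$ with $a^2 b \leq a b x_1 a$, which together with $a \leq a^2$ from Lemma \ref{cr17} gives $ab \leq a b x_1 a$. Second, applying left normality to $(a,b,b)$ produces some $x_2 \in S$ with $a b^2 \leq a b x_2 b$, and together with $b \leq b^2$ this gives $ab \leq a b x_2 b$. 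Substituting the second inequality into the leading occurrence of $ab$ in the first, and using monotonicity of multiplication, yields
\[ ab \;\leq\; a b x_1 a \;\leq\; (a b x_2 b)\, x_1 a \;=\; a\,(b x_2)\, b\, (x_1)\, a, \]
which is of the form $a x b y a$ with $x = b x_2 \in S$ and $y = x_1 \in S$. Lemma \ref{cr21} then promotes this to the defining inequality $ab \leq a x b x a$, so $S$ is left regular, and the two conclusions follow from Theorems \ref{cr22} and \ref{cr23}.

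I do not anticipate any real technical obstacle here: the only non-automatic ingredient is recognizing which two instances of the left normal axiom to feed into Lemma \ref{cr21}. The triples $(a,a,b)$ and $(a,b,b)$ are the natural candidates because they are exactly what is needed to insert an ``$a$'' on the right and a ``$b$'' in the middle of the skeleton $ab$; everything beyond that is routine use of the ordered-semigroup axioms and the powers inequality from Lemma \ref{cr17}.
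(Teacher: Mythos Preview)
Your proposal is correct and takes a genuinely different route from the paper's proof. The paper argues each part directly: for (1) it shows $\jc \subseteq \lc$ by applying left normality to the triple $(x,b,ya)$ occurring inside the relation $a \leq a(xbya)$ to manufacture an element of $(Sb]$, and checks the reverse inclusion $\lc \subseteq \jc$ via $a \leq a^{3}$; for (2) it then works inside a single $\lc$-class and verifies the left-zero condition by hand. Your approach instead isolates the single implication ``left normal $\Rightarrow$ left regular'' (via the two instances $(a,a,b)$ and $(a,b,b)$ of the axiom, spliced through Lemma~\ref{cr21}) and then invokes Theorems~\ref{cr22} and~\ref{cr23} as black boxes. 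This is cleaner and more modular: it makes explicit that the present theorem is a corollary of the earlier machinery and avoids repeating the $\lc$-class computation already carried out in Theorem~\ref{cr23}. The paper's direct argument, by contrast, is more self-contained and does not route through Lemma~\ref{cr21}, but at the cost of redoing work that is already packaged in the left-regular theory.
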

\begin{proof}
$(1)$: Let  $a,b \in S \;\textrm{such \;that} \;a \rho b$. Then
there are $ x,y,u,v \in S $ such that
\begin{align}\label{LN}
 a \leq a(xbya), b\leq b(uavb).
\end{align}
Since $S$ is left normal, we have for $x,b,ya \in S, xbya \leq xyatb
\;\textrm{for \; some} \;t \in S$. Similarly  there is $ s \in S
\;\textrm{such \;that} \;uavb \leq uvbsa$.  So from \ref{LN}, $a
\leq (axyat)b \; \textrm{and} \;b \leq (buvbs)a$. Hence $a \lc b$.
Thus $\rho \subseteq \lc$.

Again, let $a,b \in S \;\textrm{such \;that} \;a \lc b$. Thus there
re $ u,v \in S \;\textrm{such \;that} \;a \leq ub \;and \;b \leq va
$. By lemma[], we have $a \leq a^{3}= aaa \leq auba \leq aubba \;
\textrm{for \;some} \;u,b \in S $. Therefore $a \rho b$. Thus $\lc
\subseteq \rho$. Thus $\lc= \rho$.

$(2)$: Here we are only to proof that each $\lc$-class is a left
zero. For this let  $\lc$-class $(x)_{\lc}=L, \;(\textrm{say}) \;
\textrm{for \;some} \;x \in S$. Clearly $(x)_{\lc}$ is a
subsemigroup of $S$. Take $a,b \in L$. Then $y,z \in S
\;\textrm{such \; that} \;a \leq yb, b \leq za$. Since $S$ is left
normal, there is   $ t \in S \; \textrm{such \;that} \; a \leq yb
\leq (yb)b \leq yzab $.

This implies $a \leq a^{2} \leq a(ayzb)b$. Thus $(a)_{\lc}=
(a^{2}yzb)_{\lc}= (ayzb)_{\lc}$. Therefore  $L$ is left zero.
 Hence $S$ is a complete semilattice of left zero idempotent ordered semigroups.
\end{proof}

\begin{Theorem}
Let $S$ be a idempotent ordered semigroup, then $S$ is normal if and
only if $\lc$ is right normal band congruence and  $\rc$ is left
normal band congruence.
\end{Theorem}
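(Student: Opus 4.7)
The plan is to handle the two directions separately, starting with the cleaner backward implication.

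$(\Leftarrow)$ Assume $\lc$ is a right normal band congruence and $\rc$ is a left normal band congruence; i.e., for all $a,b,c\in S$, $abc\lc bac$ and $abc\rc acb$. Fix $a,b,c\in S$. From $abc\rc acb$ we get $abc\leq acb\cdot p$ for some $p\in S^{1}$; right-multiplying by $a$ yields $abca\leq acbpa$. Next, the right-normality of $\lc$ applied to the triple $(b,p,a)$ gives $bpa\lc pba$, whence $bpa\leq w\cdot pba$ for some $w\in S^{1}$. Left-multiplying by $ac$ preserves the inequality and produces $acbpa\leq ac(wp)ba$, so $abca\leq ac(wp)ba$. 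A brief case analysis on whether $wp\in S$ or $wp=1$ (the latter degenerate case is handled by $ba\leq(ba)^{2}$, letting us take $x=ba\in S$) exhibits $x\in S$ with $abca\leq ac\cdot x\cdot ba$, establishing normality.

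$(\Rightarrow)$ Assume $S$ is normal. I must verify three things for the $\lc$-claim, with the symmetric statements for $\rc$ following by the left--right duality of the normal axiom (the reversed inequality $acba\leq abxca$ is also an instance of normality, obtained by substituting $(a,c,b)$). The three things are: (i) $\lc$ is a two-sided congruence; (ii) $S/\lc$ is a band; (iii) $abc\lc bac$ for all $a,b,c$. Claim (ii) is immediate from $a\leq a^{2}$, which forces $a\lc a^{2}$. $\lc$ is automatically a right congruence on any ordered semigroup; for (i), I would invoke normality to rewrite the witness $a\leq tb$ of $a\lc b$, after left-multiplication by $c$, into a form that factors through $cb$. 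For (iii), right-multiplying the normality identity $abca\leq acxba$ by $c$ gives $abcac\leq (acx)\cdot bac$, placing $abcac\in(S\cdot bac]$; what remains is to show $abc\lc abcac$, so that $abc\in(S^{1}\cdot abcac]\subseteq(S\cdot bac]$. The reverse containment $bac\in(S^{1}\cdot abc]$ follows from the same argument applied to the triple $(b,a,c)$.

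The principal obstacle lies in this last reduction---verifying $abc\lc abcac$ in a general normal idempotent ordered semigroup. In every canonical model (rectangular bands, $(\mathbb{N},\max,\leq)$, semilattices, or $P_{f}$ of a normal band) one in fact has $abc=abcac$, but in the abstract ordered setting this pointwise identity need not hold, and the $\lc$-equivalence must instead be extracted by iterating normality against the idempotent powers $(abc)^{n}$ and exploiting that each long word $(abc)^{n}$ contains multiple overlapping $ABCA$-subwords to which the normal axiom applies. The remainder of the argument consists of straightforward chains of inequalities using normality, the $\lc/\rc$-equivalences, and the idempotent inequality $a\leq a^{2}$.
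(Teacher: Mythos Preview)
Your backward implication is fine and matches the paper's argument in spirit: the paper uses $abc\rc acb$ together with $bca\lc cba$ and the idempotent inequality $bc\le (bc)^2$ to obtain
\[
abca \le abcbca = (abc)(bca) \le (acb\,x_1)(x_2\,cba) = ac\,(b x_1 x_2 c)\,ba,
\]
which is essentially your computation with a different pairing of the two Green relations.

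The forward direction, however, has a genuine gap. You reduce the right-normality of $\lc$ to the claim $abc\lc abcac$, then explicitly label this ``the principal obstacle'' and leave it unproved, gesturing at an iteration of the normal identity over powers $(abc)^n$. That is not a proof; and the detour through $abcac$ is unnecessary. The paper establishes $abc\in (S\,bac]$ directly, by two applications of normality:
\[
abc \le abcabc \le ab\,(cb\,t_1\,ac) = (abcb t_1 a)\,c \le (a\,(cbt_1)\,t_2\,b\,a)\,c = (acbt_1t_2)\cdot bac,
\]
where the first normality step is applied to $c\!\cdot\! a\!\cdot\! b\!\cdot\! c$ and the second to $a\!\cdot\! b\!\cdot\! (cbt_1)\!\cdot\! a$. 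The symmetric inclusion $bac\in(S\,abc]$ then follows by interchanging $a$ and $b$, giving $abc\lc bac$ outright. This is exactly the computation you were unable to supply. Your sketch of the left-congruence property of $\lc$ is also only a sketch; the paper carries it out by the same mechanism: from $a\le xb$ one gets $ca\le cxb\le cxbcxb$, and normality applied to $b\!\cdot\! c\!\cdot\! x\!\cdot\! b$ turns the tail into a word ending in $cb$.
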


\begin{proof}
First we shall see that  $\lc$ is left congruence on $S$. For this
let us take $a,b \in S \; \textrm{such \;that} \;a \lc b
\;\textrm{and} \;c \in S$. Then there is $x,y \in S \; \textrm{such
\;that} \;a \leq xb, b \leq ya$. Now as $S$ is normal idempotent
ordered semigroup, $ca \leq cxb \leq cxbcxb \leq cxbx(s_{1})cb
\;\textrm{for \;some} \;s_{1} \in S$. Thus $a \leq s_{2}cb
\;\textrm{where} \;s_{2}=cxbxs_{1} \in S$.  Again $cb \leq s_{4}ca
\;where \;s_{4}= cyays_{3} \in S$. So $ca \lc cb$. It finally  shows
that $\lc$ is congruence on $S$. Similarly it can be shown that $\rc$ is congruence on $S$.\\
Next consider that $a,b,c \in S$ are arbitrary. Then since $S$ is
normal idempotent ordered semigroup, $abc \leq abcabc \leq
abcbt_{1}ac \leq acb(t_{1}t_{2}bac) \;\textrm{for \;some}
\;acbt_{1}t_{2} \in S. $ Also $bac \leq bacbac \leq bacat_{3}bc \leq
(bct_{3}t_{4}abc) \;\textrm{for \;some} \;bct_{3}t_{4} \in S$. So
$abc \lc bac$. Similarly $abc \rc acb$. This two relations
respectively shows that  $\lc$ is right normal band congruence and
$\rc$ is left normal band congruence.

Conversely, suppose that  $\lc$ is right normal band congruence and
$\rc$ is left normal band  congruence. Consider $a,b,c \in S$. Then
$abc \rc acb \; and \;bca \lc
 cba$. Then $\exists x_{1},x_{2} \in S$ such that
 \begin{align*}
 abc \leq (acb)x_{1} \;and \;bca \leq x_{2}cba.
\end{align*}
Now then $abc \leq (abc)bca \leq (acb)x_{1}bca \leq
ac(bx_{1}x_{2}c)ba \;for \;some \;bx_{1}x_{2}c \in
 S$. Hence $S$ is a Nidempotent ordered semigroup.

\end{proof}

\bibliographystyle{plain}

\end{document}